\numberwithin{equation}{section}
\theoremstyle{definition}
\newtheorem{definition}{Definition}[section]
\theoremstyle{remark}
\newtheorem{remark}[definition]{Remark}
\theoremstyle{plain}
\newtheorem{theorem}[definition]{Theorem}
\newtheorem{lemma}[definition]{Lemma}
\newtheorem{claim}[definition]{Claim}
\newtheorem{result}[definition]{Result}
\newcommand{\eps}{\varepsilon}
\newcommand{\Om}{\Omega}
\newcommand{\st}{\subset}
\newcommand{\St}{\Subset}
\newcommand{\mco}{\mathcal{O}}
\newcommand{\bn}{\mathbb{B}^{d}}
\newcommand{\cplx}{\mathbb{C}}
\newcommand{\re}{\mathbb{R}}
\newcommand{\cn}{\mathbb{C}^d}
\begin{document}
	\title[On the Approaching Geodesic Property via the Quotient Invariant]{On the Approaching Geodesic Property via the Quotient Invariant}
	\author{ Kingshook  Biswas AND Sanjoy Chatterjee}
    \address{Stat-Math Unit, Indian Statistical Institute, 203 B. T. Rd., Kolkata 700108, India}
    \email{kingshook@isical.ac.in}
	\address{Department of Mathematics and Statistics, Indian Institute of Technology,
				KANPUR}
		\email{ramvivsar@gmail.com }
	\keywords{Kobayashi distance, Geodesics, Quotient invariant, Squeezing function,  Gromov hyperbolicity, Horospheres.}
	\subjclass[2020]{Primary: 	32F45,  53C23}

	\date{\today}
\begin{abstract}
 We study the approaching geodesic property of  a bounded domain in $\cn$ with respect to the Kobayashi distance  using the quotient invariant.
\end{abstract}

\maketitle

\section{Introduction}

The Gromov compactification $\overline{X}^{G}$ of a proper geodesic Gromov hyperbolic metric space $(X,d)$ is fundamental in the study of metric geometry. In \cite{Gromov:1981}, Gromov introduced a compactification procedure for proper metric spaces by embedding the metric space $(X,d)$ into the space $C(X)$ of real-valued continuous functions (see Section~\ref{sec-prem} for the precise definition). This construction, commonly referred to as the horofunction compactification and denoted by $\overline{X}^{H}$, provides a natural boundary associated to the metric structure of $X$. A fundamental question is to understand how these two compactifications  $\overline{X}^{H}$ and $\overline{X}^{G}$ compare to one another. In \cite[Theorem 1.1]{AFG2024}, it is shown that for a proper geodesic Gromov hyperbolic metric space $(X,d)$, the \emph{approaching geodesic property} (see Definition~\ref{def-apgeo}) suffices to guarantee that the horofunction compactification $\overline{X}^{H}$ and the Gromov compactification $\overline{X}^{G}$ are homeomorphic. More precisely, the following result is proved in \cite[Theorem 1.1]{AFG2024}:

\begin{result}\label{R:extn of busemanmap}
Let $(X , d)$ be a proper geodesic Gromov hyperbolic metric space with
the approaching geodesics property. Then the Busemann map induces a homeomorphism $\overline{X}^{H} \to \overline{X}^{G}$ extending the identity map. Thus   $\overline{X}^{H}$ and $\overline{X}^{G}$ are topologically equivalent.

\end{result}

The approaching geodesic property also has further implications in the study of orbits under the iteration of non-expanding self-maps of Gromov hyperbolic metric spaces; see, for instance, \cite[Proposition 4.20]{AroFia2025} and \cite[Proposition 4.10]{AroFia2025}. It follows from \cite[Proposition 5.2]{AroFia2025} that not every Gromov hyperbolic metric space satisfies the \emph{approaching geodesic property}. This naturally raises the question: which metric spaces do possess the \emph{approaching geodesic property}? The study of the metric geometry of a domain in $\mathbb{C}^n$ endowed with the Kobayashi distance is one of the major areas of interest in several complex variables. In this article, we always consider the domains in $\cn$ for which the Kobayashi (pseudo)distance is a genuine distance; that is, we consider Kobayashi hyperbolic domains in $\cn$.
 In \cite{AFG2024}, Arosio et al. identified a class of Kobayashi hyperbolic domains in $\cn$ that satisfy the approaching geodesic property  using  the squeezing function. They established the following result:

\begin{result}\label{R:appsqu}\cite[Theorem 4.3]{AFG2024}
    Let $D \subset \mathbb{C}^n$ be a bounded domain such that $\lim_{z \to \partial D} s_{D}(z) = 1$. Then the Kobayashi metric space $(D, K_{D})$ has the approaching geodesic property.
\end{result}
\noindent
As a consequence of Result~\ref{R:appsqu}, Arosio et al.\ proved the following:

\begin{result}\cite[Corrolary 4.5]{AFG2024}\label{R:existennce of limit}
Let $D \Subset \mathbb{C}^n$ be one of the following:
\begin{enumerate}
    \item[(i)] a strongly pseudoconvex domain with $\mathcal{C}^{2}$ boundary,
    \item[(ii)] a convex domain satisfying $\lim_{z \to \partial D} s_{D}(z) = 1$.
    \end{enumerate}
Let $K_{D}$ denote the Kobayashi distance on $D$. Then any two of the compactifications $\overline{D}^{H}$, $\overline{D}$, and , $\overline{D}^{G}$ are homeomorphic via maps extending the identity. 
\end{result}
The existence of a  continuous extension of the identity map from the Euclidean closure $\overline{D}$ to the horofunction compactification  $ \overline{D}^{H}$ ensures  the existence of the  following limit for any $\zeta \in \partial D$ :  
\begin{align}\label{E:limit}
   h_{p,\zeta}(z):= \lim_{w\to \zeta} [K_{D}(z,w)-K_{D}(p, w)].
\end{align}
\noindent
If the limit in \eqref{E:limit} exists then the corresponding horoballs are defined by
the set 
$\{ z\in D : h_{p,\zeta}(z) < \tfrac{1}{2}\ln R \}.
$
The notion of a horoball plays an important role in the study of the dynamics of iterates of holomorphic self-maps (see \cite{Abate1988Z}, \cite{Abateiterationtheorynote}, \cite{Abate1990} for details ). Therefore, the study of approaching geodesic property of Kobayashi hyperbolic domain is important. The aim of this article is  to study the   approaching geodesic property of the domain in terms of the  {\em quotient invariant } which is a biholomorphic invariant real valued function defined on a domain $\cn$. Let us mentioned the definition of quotient invariant before going to our main result.
\begin{definition}
Let $D \subset \cn$ be a  domain. The Carathéodory and
Kobayashi–Eisenman volume elements on $D$ at the point $p \in D$ are denoted by $c_{D}(p)$ and $k_{D}(p)$ respectively, and is defined by the following:
\begin{align*}
    c_{D}(p)&:=\sup\{|\text{det}\psi'(p)|^2: \psi \in \mathcal{O}(D, \mathbb{B}^{n}), \psi(p)=0\},\\
     k_{D}(p)&:=\inf\{|\text{det}\phi'(0)|^{-2}: \phi \in \mathcal{O}(\mathbb{B}^{n}, D), \phi(0)=p\}.
\end{align*}
\end{definition}
 \begin{remark}\label{remark}
    For any  taut domain $D \st \cn $ ,  by Montel's theorem, it follows there exists $\phi \in \mathcal{O}(\mathbb{B}^{n}, D)$ such that $k_{D}(p)=|\text{det}(\phi'(0))|^{-2} \neq 0$. Similarly,  there exists  $\psi \in \mathcal{O}(D,\mathbb{B}^{n})$ such that $c_{D}(p)=|\text{det}(\psi'(p))|^{2}$ for some $\psi \in \mathcal{O}(D,\mathbb{B}^{n})$.
    \end{remark}
\noindent
The {\em quotient invariant} of taut domain $D$ is denoted by $q_{D}(z)$ and is  defined by  
\begin{align}
    q_{D}(z):=\frac{c_{D}(z)}{k_{D}(z)}.
\end{align}
It is well known that $q_{D}(z)$ is a biholomorphic invariant \cite[Chapter 11, p.~446]{Krantzbook}. 
It follows from \cite[Theorem 1]{GWU1985} that if there exists a point $p \in D$ such that 
$q_{D}(p)=1$, then $D$ is biholomorphic to the unit ball. 
In \cite[Theorem~1.3]{DevBorah2020}, the author has used the quotient invariant to detect 
strong pseudoconvexity at boundary points of a domain. In this paper, we have studied the approaching geodesic property of a domain via quotient invariant.

The main result of the paper is the following:

\begin{theorem}\label{T:approachsgeo}
Let $D \St \cn$ be a complete hyperbolic domain and  $\zeta \in \partial_{H}D$. Suppose that $q_{D}(z) \to 1$ as $z \to \zeta$. Then the domain $D$ has the approaching geodesic property at $\zeta$.
\end{theorem}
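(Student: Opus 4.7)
The plan is to adapt the squeezing-function argument used to prove Result~\ref{R:appsqu} in \cite[Theorem 4.3]{AFG2024}, replacing the squeezing map by an extremal pair realising the quotient invariant. Since $D \St \cn$ is complete hyperbolic it is taut, so by Remark~\ref{remark}, for every $z$ sufficiently close to $\zeta$ there exist extremal maps $\phi_z \in \hol(\bn, D)$ with $\phi_z(0) = z$ and $\psi_z \in \hol(D, \bn)$ with $\psi_z(z) = 0$ such that $|\det \phi_z'(0)|^{-2} = k_D(z)$ and $|\det \psi_z'(z)|^2 = c_D(z)$. Their composition $F_z := \psi_z \circ \phi_z$ is a holomorphic self-map of $\bn$ fixing the origin whose complex Jacobian satisfies $|\det F_z'(0)|^2 = q_D(z)$, which tends to $1$ as $z \to \zeta$ by hypothesis.

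By Cartan's uniqueness theorem combined with the Schwarz lemma for $\bn$, any holomorphic self-map of $\bn$ that fixes $0$ and satisfies $|\det F'(0)| = 1$ must be a unitary automorphism. A normal-family argument then shows that $F_z$ converges to a unitary rotation, and, after absorbing the rotation into $\phi_z$, to the identity, uniformly on compacta of $\bn$ as $z \to \zeta$. Since $\phi_z$ and $\psi_z$ are both Kobayashi-distance-decreasing and $\psi_z$ is now an almost left-inverse of $\phi_z$, one deduces a local \emph{almost-isometry} property: for every $R, \eps > 0$ there is a neighborhood $U_\zeta$ of $\zeta$ in $\cn$ such that for every $z \in U_\zeta \cap D$ and every $u, v \in \bn$ with $K_{\bn}(0, u), K_{\bn}(0, v) \le R$ one has
\begin{align*}
\bigl|K_D(\phi_z(u), \phi_z(v)) - K_{\bn}(u, v)\bigr| < \eps.
\end{align*}
In particular $\phi_z$ embeds a large Kobayashi ball of $\bn$ almost isometrically into $D$ near $z$.

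With this local approximation, the approaching geodesic property at $\zeta$ can be derived by transfer from the same property of $\bn$. Given a sequence of $K_D$-geodesics $\gamma_n$ whose endpoints converge to $\zeta$ in the sense of Definition~\ref{def-apgeo}, I would pick base points $z_n$ on $\gamma_n$ lying in $U_\zeta \cap D$, pull back the relevant portion of $\gamma_n$ via $\phi_{z_n}$ to obtain a $(1,\eps_n)$-quasi-geodesic of $\bn$ whose endpoints approach some boundary point of $\bn$, apply the approaching geodesic property of $\bn$ (which is strongly pseudoconvex with $\smoo^2$ boundary, hence covered by Result~\ref{R:appsqu}), and push the conclusion forward through $\phi_{z_n}$.

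The principal obstacle I foresee is uniformity along $\gamma_n$. The almost-isometry in the previous step is valid only on a Kobayashi ball of fixed radius $R$, while $\gamma_n$ may be arbitrarily long in the Kobayashi metric, so propagating the approximation requires chaining along a sequence of base points $z_n^{(k)}$ that must all remain in the good region $U_\zeta$ where $q_D$ is close to $1$. Ruling out the possibility that $\gamma_n$ wanders deep into $D$ before its endpoints return to $\zeta$ is essentially a restatement of the property being proved, so the argument must close by a compactness/contradiction step: a hypothetical drifting subsequence would yield, in the limit, a geodesic in $D$ with one endpoint in the compact interior and one at $\zeta$, whose behaviour would contradict the ball-like structure near $\zeta$ detected by the almost-isometry above.
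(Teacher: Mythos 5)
Your renormalization setup (extremal maps $\phi_z,\psi_z$ realising $k_D$ and $c_D$, the composition $\psi_z\circ\phi_z$ converging to a unitary by Cartan/Schwarz or Result~\ref{R:Rudin}, and the resulting almost-isometric embedding of large Kobayashi balls via $\phi_z$) is exactly the paper's starting point. But the proof is not complete: you yourself flag that the closing step is missing, and the sketch you offer for it does not work as stated. Transferring the approaching geodesic property of $\bn$ through $\phi_{z_n}$ on balls of fixed Kobayashi radius cannot be ``chained'' along the geodesics, and the final picture you invoke --- a limit geodesic with one endpoint in a compact part of $D$ contradicting ``ball-like structure near $\zeta$'' --- is not a contradiction you have derived from anything; as you admit, ruling out the drifting behaviour this way is circular. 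So the argument has a genuine gap precisely where the theorem has to be proved.

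The paper closes the argument differently, and in a way that makes your uniformity worry evaporate. Argue by contradiction: if two asymptotic rays $\gamma_1,\gamma_2$ with endpoint $\zeta$ are not strongly asymptotic, Result~\ref{R:appgeocondition} gives times $t_n\to\infty$ and constants $C>c>0$ with $\sup_{t\ge 0}K_D(\gamma_1(t),\gamma_2(t))<C$ but $\inf_{s\ge 0}K_D(\gamma_1(t_n),\gamma_2(s))\ge c$. One then rescales at $z_n:=\gamma_1(t_n)$, i.e.\ considers the shifted geodesics $t\mapsto\gamma_j(t+t_n)$ on $[-t_n,\infty)$ and composes with $\psi_n$. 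The almost-isometry is only ever needed on a Kobayashi ball $B_{R_K}$ whose radius depends on the fixed compact set $K\st\re$, with $n$ large depending on $K$; an Arzel\`a--Ascoli diagonal argument over compacta exhausting $\re$ (Lemmas~\ref{L:convergeo} and~\ref{L:stablegeo}) then produces two limit geodesic \emph{lines} $\eta,\tilde\eta$ in $\bn$ --- no chaining along the geodesic and no global control of where $\gamma_j$ travels in $D$ is required. The lower bound $c$ forces $\eta\ne\tilde\eta$ (Lemma~\ref{L:differgeo}), while the upper bound $C$, applied to the shifted parametrizations, forces $\sup_{t\in\re}K_{\bn}(\eta(t),\tilde\eta(t))\le C$ in \emph{both} directions (Lemma~\ref{L:boundline}); two distinct geodesic lines of $\bn$ cannot share both endpoints at infinity, which is the contradiction. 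If you want to salvage your write-up, replace the ``transfer of the approaching geodesic property of $\bn$'' by this rescaling-to-geodesic-lines argument; the rest of what you wrote is sound and matches the paper's preparatory lemmas.
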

\begin{remark}\label{remark1}
By the Schwarz lemma, one has $q_{D}(z) \le 1$ for all $z \in D$. It follows from  \cite[Theorem 3.1]{DGZ2016} that  for any bounded domain  $D \subset \cn$   $s_{D} \leq q_{D}(z)$. Hence, $s_{D}(z) \to 1$ does not necessarily imply that that $q_{D}(z)\to 1 $.
\end{remark}
\noindent
It follows from Remark~\ref{remark1} that Theorem~\ref{T:approachsgeo} extends the Result~\ref{R:appsqu}. Next, using Theorem~\ref{T:approachsgeo}, we establish a metric-geometric property of bounded convex domains for which $q_{D}(z) \to 1$ as $z \to \partial D$. In particular, we show that 
\begin{theorem}\label{T:horosphere}
If  $D\Subset \cn$ be a convex domain and  $\lim_{z \to  \partial D}q_{D}(z)=1$. Then $(D, K_{D})$ is a Gromov hyperbolic metric space that enjoys the visibility property. Moreover, any two compactification $\overline{D}$, $\overline{D}^{G}$ and $\overline{D}^{H}$ are homeomorphic via the  maps extending the identity.
\end{theorem}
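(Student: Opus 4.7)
The plan is to imitate the proof of Result~\ref{R:existennce of limit} for convex domains, replacing Result~\ref{R:appsqu} (the squeezing-function input) with Theorem~\ref{T:approachsgeo} (the quotient-invariant input) wherever the approaching geodesic property is needed. Schematically: from $q_D\to 1$ I extract the approaching geodesic property at every boundary point; from $q_D\to 1$ together with convexity I extract Gromov hyperbolicity and visibility; finally I conclude the three-way homeomorphism.

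Since $D$ is bounded and convex it is complete Kobayashi hyperbolic, and every Euclidean boundary point $\zeta\in\partial D$ can be realized as a horofunction limit and hence lies in $\partial_H D$. Therefore Theorem~\ref{T:approachsgeo} applies at every $\zeta\in\partial D$, and $(D,K_D)$ enjoys the approaching geodesic property throughout $\partial D$. For the Gromov-hyperbolicity step I would argue by contradiction: if $\partial D$ contained a non-trivial complex affine disk $\Delta$, then along interior points $z_n\in D$ approaching the relative interior of $\Delta$ the Kobayashi--Eisenman volume $k_D(z_n)$ would remain large (the affine disk in $\partial D$ furnishes a cheap inward immersion of $\mathbb{B}^n$ into $D$ along the flat direction), while $c_D(z_n)$ would stay small in the same direction (a bounded holomorphic map into the ball cannot resolve a flat complex disk sitting in $\partial D$). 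The resulting bound $q_D(z_n)\le C<1$ would contradict the hypothesis. Hence $\partial D$ admits no non-trivial complex affine disk, and together with convexity this yields both the visibility property and Gromov hyperbolicity of $(D,K_D)$, via the characterisation of Gromov-hyperbolic bounded convex domains due to Zimmer and Bracci--Gaussier.

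With APG and Gromov hyperbolicity in hand, Result~\ref{R:extn of busemanmap} produces a homeomorphism $\overline{D}^H\cong\overline{D}^G$ extending the identity on $D$. To include the Euclidean closure, I would verify that the limit in \eqref{E:limit} exists at every $\zeta\in\partial D$ and is independent of the approach $w\to\zeta$; this is precisely what visibility affords on a bounded convex domain, since any two sequences in $D$ converging Euclideanly to the same $\zeta$ are Kobayashi-asymptotic, and distinct Euclidean boundary points yield distinct horofunctions. Compactness then upgrades the resulting continuous injection $\overline{D}\to\overline{D}^H$ to a homeomorphism, completing the chain $\overline{D}\cong\overline{D}^H\cong\overline{D}^G$. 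The main obstacle is the second step: the quotient invariant is less quantitatively tractable than the squeezing function (cf.\ Remark~\ref{remark1}), so ruling out a flat complex disk in $\partial D$ from $q_D\to 1$ requires a careful comparison of $c_D$ and $k_D$ along the flat direction, and is the technical heart of the argument.
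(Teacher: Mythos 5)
Your outline agrees with the paper on the outer layers (approaching geodesics from $q_D\to 1$ by rerunning the argument of Theorem~\ref{T:approachsgeo}, then Result~\ref{R:extn of busemanmap} for $\overline{D}^{H}\cong\overline{D}^{G}$), but the step you yourself identify as the technical heart --- Gromov hyperbolicity --- is where the argument breaks. Ruling out a nontrivial complex affine disk in $\partial D$ is \emph{not} sufficient for Gromov hyperbolicity of a bounded convex domain: absence of boundary disks characterizes the visibility property (this is essentially the content of the Bracci--Nikolov--Thomas circle of results), but Gromov hyperbolicity is strictly stronger. For instance, a smoothly bounded convex domain of infinite type need contain no analytic disk in its boundary, yet by Zimmer's finite-type characterization it fails to be Gromov hyperbolic. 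The criterion you must verify is the one in Result~\ref{R:Zhimmersimpleboundary}: \emph{every} domain in $\overline{\text{Aff}(\cn)\cdot D}\cap\mathbb{X}_{d}$, i.e.\ every affine rescaling limit of $D$ in the local Hausdorff topology, must have simple boundary --- a condition on the blow-up limits, not on $\partial D$ itself. Your disk-exclusion argument (which is in any case only sketched: the asserted bounds on $c_D$ and $k_D$ along the flat direction are not proved) cannot see these limits.

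The paper closes exactly this gap differently: given a limit $(A_nD, A_nz_n)\to (D',z_0)$ with $z_n\to\partial D$, it uses affine invariance of $q$, the hypothesis $q_D(z_n)\to 1$, and the upper semicontinuity of $(D,z)\mapsto q_D(z)$ on $\mathbb{X}_{d,0}$ (Result~\ref{R:upper semicon}) to get $q_{D'}(z_0)=1$, whence $D'$ is biholomorphic to $\bn$ by the Graham--Wu rigidity theorem and in particular has simple boundary; Zimmer's criterion then gives Gromov hyperbolicity, and Results~\ref{R:bracci1} and~\ref{R:bracci2} give the homeomorphism $\overline{D}^{G}\cong\overline{D}$ together with visibility. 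Note also that your final step, building $\overline{D}\cong\overline{D}^{H}$ by hand from existence and injectivity of the limits in \eqref{E:limit}, is not needed and is itself incomplete as stated (the claim that two sequences converging to the same Euclidean boundary point are Kobayashi-asymptotic does not follow from visibility alone); the paper instead obtains $\overline{D}\cong\overline{D}^{G}$ directly from the Bracci--Gaussier--Zimmer extension theorem and then composes with $\overline{D}^{G}\cong\overline{D}^{H}$. So the fix is to replace your boundary-disk argument by the rescaling-plus-upper-semicontinuity argument; without it the Gromov hyperbolicity claim is unsupported.
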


\section{Preliminaries and Technical Results}\label{sec-prem}
In this section, we introduce the definitions and notions relevant to this paper, and recall several known results that will be used in the proofs of our  theorems.
\begin{definition}[Gromov compactification]
Let $(X,d)$ be a proper geodesic Gromov hyperbolic metric space and  $\mathcal{R}(X)$ denotes the set of all geodesic rays in $(X, d)$. Define an equivalence relation on 
$\mathcal{R}(X)$ as follow:   $\gamma \sim \sigma $ if and only if $\sup_{t \geq 0}d(\gamma(t), \sigma(t))<\infty$. Two geodesic  rays $ \gamma , \sigma$ are said to be   {\em asymptotic} if $\gamma \sim \sigma$. The {\em Gromov boundary} of $(X, d)$ is denoted by $\partial_{G}X$ and is defined by $\partial_{G}X:=\mathcal{R}(X)/\sim$. The {\em Gromov compactification} of the metric space $(X, d)$ is defined by $\overline{X}^{G}:=X\sqcup \partial_{G}X$ endowed with a compact metrizable topology defined as \cite{AFG2024} 
    \end{definition}

Next, we mention the notion of horofunction compactification introduced by Gromov in \cite{Gromov:1981}

\begin{definition}[Horofunction compactifiation]
Let $(X,d)$ be a proper metric space. Let $C(X)$ be the set of all real-valued  continuous functions on $X$ endowed with the topology of uniform convergence. Fix a base point $p \in X$. For each $x \in X$ the map $d_{x}:X \to 
 \mathbb{R}$ is defined  by $d_{x}(z)=d(z,x)-d(p,x)$. Clearly, them  $i_{H}: X \to C(X)$ defined by   $x \mapsto d_{x}$ induces a homeomorphic  embedding of $X$ in the
subspace of $C(X)$ consisting of functions vanishing at the point $p \in X$. The {\em horofunction compactification } of $X$ is  denoted by $\overline{X}^{H}$ and is defined by closure of the image of  $X$ in $C(X)$ under the map  $i_{H}$. The {\em horofunction
boundary} of $X$ is defined by $\partial _{H} X:=\overline{X}^{H} \setminus X$. For a geodesic ray $\gamma:[0, \infty) \to X$ we write $\gamma(\infty)=\zeta \in \partial_{H}X$ if the $\lim_{t\to \infty}{[d(z,\gamma(t))-t]}$ exists for all $z \in x$ and  $\zeta(z)=\lim_{t\to \infty}{[d(z,\gamma(t))-t]}$.

    
\end{definition}

\begin{definition}\label{def-apgeo}

Let $(X, d)$ be a proper geodesic metric space. Two geodesic rays $\gamma, \sigma \in \mathcal{R}(X)$ are {\em strongly asymptotic} if there exists $T \in \mathbb{R}$ such that 
$$ \lim_{t \to \infty}d(\gamma(t), \sigma(t+T))=0 .$$

A proper geodesic metric space is $(X,d)$ said to have {\em the approaching geodesic property} if every {\em asymptotic geodesic} ray are {\em strongly asymptotic }. If $(X, d)$ is  proper    metric space and $\zeta \in \partial_{H}X$ we say that $X$ has {\em approaching geodesic at $\zeta$} if any two asymptotic geodesic rays $\gamma, \sigma $ with $\gamma(\infty)=\sigma(\infty)=\zeta$ are strongly asymptotic.
\end{definition}

\begin{definition}
Let   $D \st \cn$, $ z \in D$ and $X \in \mathbb{C}^n$. The Kobayashi-Royden (pseudo)metric $\kappa_D$  of $D$ are defined as:
\[
\kappa_D(z; X) = \inf \{ |\alpha| : \exists \varphi \in \mathcal{O}(D, D), \, \varphi(0) = z, \, \alpha \varphi'(0) = X \}.
\]
For $x, y \in D$ the Kobayashi distance,  is defined as follows :
\begin{equation}
    K_D(x, y) = \inf_\gamma \int_0^1 \kappa_D(\gamma(t); \gamma'(t)) \, dt,
\end{equation}
where the infimum is taken over all piecewise $C^1$ curves $\gamma : [0, 1] \to D$ with $\gamma(0) = x$ and $\gamma(1) = y$.
\end{definition}
A \textit{geodesic} for $K_D$ is a curve $\sigma : I \to D$, where $I$ is an interval in $\mathbb{R}$, such that for any $s, t \in I$, 
\[
K_D(\sigma(s), \sigma(t)) = |s - t|.
\]
Here we mention some known terminology:
\begin{itemize}
    \item If $x, y \in D$, $I = [0, a]$ and $\sigma(0) = x$, $\sigma(a) = y$, then $L = K_D(x, y)$ and we say that $\sigma$ is a geodesic joining $x$ and $y$.
    \item If $I = (-\infty, +\infty)$, we say that $\sigma$ is a \textit{geodesic line}.
    \item If $I = [0, \infty)$, we say that $\sigma$ is a \textit{geodesic ray}.
\end{itemize}

A geodesic ray $\sigma$ \textit{lands} if there exists $p \in D$ such that $\lim_{t \to \infty} \sigma(t) = p$.

The domain $D$ is \textit{complete hyperbolic} if it is Kobayashi hyperbolic and if $K_D$ is a complete distance, or equivalently, the balls for the Kobayashi distance are relatively compact. Bounded convex domains are well known to be complete hyperbolic. If a domain $D$ is complete hyperbolic, by the Hopf-Rinow Theorem, $(D, K_D)$ is a geodesic space, namely, any two points in $D$ can be joined by a geodesic.

We now mention some known results that will be used to prove our main theorems. Before going to state the result, let us mention some required definitions. 
A domain $D \st \cn$ is said to have a simple boundary if every holomorphic map $\phi: \mathbb{D} \to \partial\Om$ is constant. 

A convex domain $D \st \cn$ is called $\cplx$-properly convex if $D$ does not contain any entire complex affine lines. The set of all $\cplx$ properly convex domains  endowed with  local Hausdorﬀ topology (see \cite[Section 3]{Zimmersubelip2022} for details) is denoted by $\mathbb{X}_{d}$. The set $\mathbb{X}_{d,0}:=\{(D,z)\in \mathbb{X}_{d} \times \cn: z \in D\}$  is endowed with the subspace topology. With this terminology in place, we now state the following results, which will be used in the proof of Theorem~\ref{T:horosphere}.

\begin{result}\cite[Theorem 1.5]{Zimmersubelip2022}\label{R:Zhimmersimpleboundary}
 Let  $D \st \cn$  be a bounded convex domain. Then $(D, K_{D})$
is Gromov hyperbolic if and only if every domain in
$\overline{\text{Aff}(\cn)\cdot D} \cap \mathbb{X}_{d}$ 
has a simple boundary. Here,  $\text{Aff}(\cn)$ denotes the set of all affine automorphisms of $\cn$. 
\end{result}

\begin{result}\cite[Theorem 1.3]{DevBorah2020}\label{R:upper semicon}
   The function $f\colon \mathbb{X}_{d,0} \to \mathbb{R} $ define by $f(D,z)=q_{D}(z)$ is upper semicontinuous.   
\end{result}

\begin{result}\cite[Theorem 3.3]{BNT2022}\label{R:bracci2}
    Let $D$ be a complete hyperbolic bounded domain. Assume that $(D, K_{D})$ is 
Gromov hyperbolic. Then $D$ has the visibility property if and only if the 
identity map extends as a continuous surjective map 
$\Phi : \overline{D}^{G} \longrightarrow \overline{D}.$
Moreover, $\Phi$ is a homeomorphism if and only if $D$ has no geodesic loops in $D$. 
\end{result}

\begin{result}\cite[Theorem 1.5]{BGZ2021}\label{R:bracci1}
    Let $D \St \cn$ be a  convex domain . If $(D, K_{D})$ is Gromov hyperbolic, then the identity map 
$
id : D \to D
$
extends to a homeomorphism
$
\overline{id} : \overline{D}^{G} \to \overline{D}.
$
\end{result}

\noindent
The following result gives a necessary and sufficient condition for two geodesic rays to be strongly asymptotic.

\begin{result}\label{R:appgeocondition}\cite[Proposition 3.13]{AFG2024} Let $(X, d)$ be a proper metric space and $\gamma$ and $\tilde{\gamma}$ be two geodesic rays. The rays $\gamma$ and $\tilde{\gamma}$ are strongly asymptotic if and only if
$\lim_{t \to +\infty} \inf_{s\geq 0} d(\gamma(t), \tilde{\gamma}(s)) = 0$.
\end{result}

\noindent
The next result will be used crucially in the proof of Theorem~\ref{T:approachsgeo}.
\begin{result}\cite[Lemma 15.3.2]{Rudin2008}\label{R:Rudin}
If $F:\bn \to \bn$ is holomorphic, $F(0)=0$, $\text{det}(DF(0))=1$, then $F$ is unitary.  In particular $F \in \text{Aut}(\bn)$. 
\end{result}

\section{Proof of Theorem~\ref{T:approachsgeo}}

In order to prove Theorem~\ref{T:approachsgeo}, we first establish several preparatory lemmas. We use the following notation throughout the proof
 $B_{R}:=\{z \in \bn: K_{\bn}(0,z)<R\}$.
For $S \subset D$ we write $B_{D}(S, \eps):=\{z \in D: \inf_{x \in S}K_{D}(z,x)<\eps\}$.
\begin{lemma} \label{L:containmentball}
    Let $D \St \cn$ be complete hyperbolic domain and $\phi \in \mco(\bn, \cn)$. Let $R, \eps, \delta $ be positive real numbers such that $R>\eps>\delta>0 $ and $\phi$ be an open map on $B_{R}$. Let $K_{D}(\phi(z), \phi(0))>K_{\bn}(0,z)-\delta$ for all $z \in \overline{B}_{R}$  Then $B_{D}(\phi(0), R-\eps) \st B_{D}(\phi(B_{R}), \eps)$. 
\end{lemma}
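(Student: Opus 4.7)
The goal is, for any $w \in B_{D}(\phi(0), R-\eps)$, to exhibit a point $y \in B_{R}$ with $K_{D}(w, \phi(y)) < \eps$. In fact I expect to prove the stronger statement $w \in \phi(B_{R})$ and so conclude the inclusion of $\eps$-neighborhoods. The plan is a standard ``connectedness along a geodesic'' argument, exploiting the fact that since $\phi$ is an open map on $B_{R}$, the image $\phi(B_{R})$ is open in $D$, together with the lower bound on $K_{D}(\phi(z),\phi(0))$ on the closure $\overline{B}_{R}$.

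First, I would fix a point $w \in B_{D}(\phi(0), R-\eps)$, let $L := K_{D}(\phi(0), w) < R - \eps$, and choose a $K_{D}$-geodesic $\sigma : [0,L] \to D$ from $\phi(0)$ to $w$; such a $\sigma$ exists because $D$ is complete hyperbolic (Hopf--Rinow). I would then consider
\[
T := \sup\bigl\{\, t \in [0,L] \,:\, \sigma(t) \in \phi(B_{R}) \,\bigr\},
\]
which is a positive number because $\sigma(0) = \phi(0) \in \phi(B_{R})$ and $\phi(B_{R})$ is open in $D$. The goal reduces to showing $T = L$ and $\sigma(L) \in \phi(B_{R})$.

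The key step is the following dichotomy. If $\sigma(T) \in \phi(B_{R})$ and $T < L$, then openness of $\phi(B_{R})$ and continuity of $\sigma$ push $T$ strictly to the right, contradicting the definition of $T$; hence either $T = L$ with $\sigma(L) \in \phi(B_{R})$ (in which case $w \in \phi(B_{R})$ and we are done) or $\sigma(T) \notin \phi(B_{R})$. In the latter case, pick $t_{k} \uparrow T$ with $\sigma(t_{k}) \in \phi(B_{R})$, write $\sigma(t_{k}) = \phi(z_{k})$ with $z_{k} \in B_{R}$, and use compactness of $\overline{B}_{R}$ to extract a subsequence $z_{k} \to z_{\infty} \in \overline{B}_{R}$. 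Continuity of $\phi$ gives $\phi(z_{\infty}) = \sigma(T) \notin \phi(B_{R})$, forcing $z_{\infty} \in \partial B_{R}$, i.e. $K_{\bn}(0, z_{\infty}) = R$.

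Now apply the hypothesis at $z_{\infty} \in \overline{B}_{R}$:
\[
K_{D}(\phi(0), \sigma(T)) = K_{D}(\phi(0), \phi(z_{\infty})) > K_{\bn}(0, z_{\infty}) - \dl = R - \dl.
\]
On the other hand, since $\sigma$ is a unit-speed geodesic from $\phi(0)$,
\[
K_{D}(\phi(0), \sigma(T)) = T \leq L < R - \eps < R - \dl,
\]
where the last inequality uses $\dl < \eps$. This contradiction rules out the bad case and yields $w \in \phi(B_{R}) \subset B_{D}(\phi(B_{R}), \eps)$.

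The only subtle point, and the step I would check most carefully, is the compactness argument producing $z_{\infty} \in \overline{B}_{R}$: it is crucial that the hypothesis on $K_{D}(\phi(z),\phi(0))$ is available on the whole closure $\overline{B}_{R}$ (not just the open ball), so that the limit point $z_{\infty} \in \partial B_{R}$ is a legitimate point at which to evaluate the inequality. Everything else is a direct application of openness of $\phi$, continuity of the geodesic, and the strict ordering $\dl < \eps < R$.
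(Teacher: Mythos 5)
Your proof is correct and takes essentially the same route as the paper: a Kobayashi geodesic from $\phi(0)$, an exit/crossing point from $\phi(B_{R})$, compactness of $\overline{B}_{R}$ to write that point as $\phi(z_{\infty})$ with $z_{\infty}\in\partial B_{R}$, and then the hypothesis on $\overline{B}_{R}$ combined with $\delta<\eps$ produces the contradiction $R-\delta< R-\eps$. Your first-exit formulation in fact gives the slightly stronger inclusion $B_{D}(\phi(0),R-\eps)\subset\phi(B_{R})$, bypassing the paper's detour through $\partial B_{D}(\phi(B_{R}),\eps)$ and the nearest point of $\partial\phi(B_{R})$, but the underlying argument is the same.
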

\begin{proof}
    Clearly, $B_{D}(\phi(0), R-\eps)$ and $B_{D}(\phi(B_{R}), \eps)$ both are domains. Let $z \in B_{D}(\phi(0), R-\eps)$ and $z \notin B_{D}(\phi(B_{R}), \eps)$. Let $\gamma: [0, 1] \to D$ be a geodesic joining $\phi(0)$ and $z$. Since $\gamma(0)$ is inside the domain $B_{D}(\phi(B_{R}), \eps)$ and $\gamma(1)$ is outside the domain hence there exists $t_{0} \in (0,1)$ such that $\gamma(t_{0}) \in \partial B(\phi(B_{R}), \eps)$. Now we have the following: 
    \begin{align}\label{E:con1}
        K_{D}(\phi(0), z) >t_{0}\geq \text{dist}(\phi(0), \partial B(\phi(B_{R}), \eps)) \geq \text{dist}(\phi(0), \partial \phi(B_{R})).
    \end{align}
\noindent
Here $\partial \phi(B_{R})$ is compact.  Hence, there exists $w \in \partial \phi(B_{R})$ such that $\text{dist}(\phi(0), \partial \phi(B_{R}))=K_{D}(\phi(0), w)$. Since $w \in \partial \phi(B_{R})$, there exists $z_{n} \in B_{R}$ such that $\phi(z_{n}) \to w$ as $n \to \infty$. Since $B_{R}$ is compact subset of $\bn$, we can assume  that $z_{n} \to z_{0}$ in $\overline{B}_{R}$. Since $\phi$ is an open map, it follows that $z_{0} \in \partial B_{R}$. Therefore, $w=\lim_{n \to \infty} \phi(z_{n})=\phi(z_{0})$ for some $z_{0} \in \bn$ with $K_{\bn}(0, z_{0})=R$. Hence, from \eqref{E:con1} and our assumption we obtain that 
\begin{align}
   R-\eps> K_{D}(\phi(0),z)&> \text{dist}(\phi(0), \partial \phi(B_{R})) =K_{\bn}(\phi(0),\phi(z_{0})) \notag \\
   \intertext{From our assumption we have that }
 R-\eps> K_{D}(\phi(0),z)   &\geq K_{\bn}(0,z_{0})-\delta=R-\delta .\notag
     \end{align}
\noindent
This leads to a contradiction with our assumption of $\eps>\delta$.
\end{proof}

\begin{lemma}\label{L:containofgeo}
    Let $a \in \mathbb{R}$ and $\gamma :[a, \infty) \to D$ be a geodesic ray. Let $\phi \in \mathcal{O}(\bn , \cn)$ be an open map such that for any $R> \eps> \delta>0$, the assumption of Lemma~\ref{L:containmentball} holds. If $\gamma(t_{0})=\phi(0)$ for some $t_{0} \in [a, \infty)$. Then for any compact subset $K \subset [a, \infty)$ there exists $R_{K}>0$ such that $\gamma(K) \st B_{D}(\phi(B_{R_{K}}), \eps)$.
\end{lemma}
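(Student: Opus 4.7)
The plan is to reduce the statement to a direct application of Lemma~\ref{L:containmentball} by exploiting compactness of $K$ together with the fact that $\gamma$ is an isometric parametrization.

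First, since $K \subset [a,\infty)$ is compact and $t_0 \in [a,\infty)$ is fixed, the quantity
$$M \;:=\; \sup_{t \in K} |t - t_0|$$
is finite. Because $\gamma$ is a geodesic ray with $\gamma(t_0) = \phi(0)$, for every $t \in K$ we have
$$K_D(\gamma(t), \phi(0)) \;=\; K_D(\gamma(t), \gamma(t_0)) \;=\; |t - t_0| \;\leq\; M,$$
so $\gamma(K) \subset \overline{B}_D(\phi(0), M)$.

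Next, I would choose $R_K$ to be any real number with $R_K > M + \eps$; in particular this guarantees $R_K > \eps > \delta > 0$, so that the hypotheses of Lemma~\ref{L:containmentball} are triggered by the standing assumption. With this choice,
$$\gamma(K) \;\subset\; \overline{B}_D(\phi(0), M) \;\subset\; B_D(\phi(0), R_K - \eps).$$
Applying Lemma~\ref{L:containmentball} with the triple $(R_K, \eps, \delta)$ then yields
$$B_D(\phi(0), R_K - \eps) \;\subset\; B_D(\phi(B_{R_K}), \eps),$$
and combining the two inclusions gives $\gamma(K) \subset B_D(\phi(B_{R_K}), \eps)$, as required.

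There is really no obstacle here: the content has already been absorbed into Lemma~\ref{L:containmentball}, and the present statement is essentially a compactness packaging of it. The only points to watch are (i) using the geodesic property to convert the parameter bound $|t - t_0| \leq M$ into a Kobayashi-distance bound, and (ii) ensuring the chosen $R_K$ satisfies $R_K > \eps$ (so that $R_K - \eps > 0$ and the previous lemma genuinely applies). Both are handled by taking $R_K > M + \eps$.
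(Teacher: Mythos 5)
Your proof is correct and follows essentially the same route as the paper: use the geodesic property to bound $K_D(\gamma(t),\phi(0))=|t-t_0|$ over the compact set $K$, choose $R_K$ large enough that this bound is at most $R_K-\eps$, and then apply Lemma~\ref{L:containmentball}. The only difference is cosmetic — you take $R_K > \sup_{t\in K}|t-t_0| + \eps$ while the paper takes $R_K = \sup_{t\in K}|t| + |t_0| + \eps$.
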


\begin{proof}
   Let $K \st [a, \infty)$ be any compact subset and $r:=\sup_{t\in K}|t|>0$. Choose $R_{K}:=r+\eps+|t_{0}|$. We obtain the following for all $t \in K$
   \begin{align}
       K_{D}(\gamma(t), \phi(0))=K_{D}(\gamma(t),\gamma(t_{0}))=|t-t_{0}|\leq |t|+|t_{0}|\leq R_{K}-\eps.
   \end{align}
\noindent
Therefore, in view of Lemma~\ref{L:containmentball}, we obtain that $\gamma(t) \in B_{D}(\phi(B_{R_{K}}), \eps)$ for all $t \in K$.
\end{proof}

\begin{lemma}\label{L:convergeo}
    Let $t_{n} >0$ with $t_{n} \to \infty$ as $n \to \infty$ and  $\gamma_{n}:[-t_{n}, \infty) \to D$ be a sequence of geodesic rays such that $\gamma_{n}(0)=z_{n}$ and $z_{n} \to \partial D$ as $n \to \infty$. Let  $\psi_{n} \in \mco(D, \bn)$ with 
 $\psi_{n}(z_{n})=0$ and $\phi_{n} \in \mco(\bn, D)$ with $\phi_{n}(0)=z_{n}$. If  $\psi_{n}\circ
  \phi_{n} \colon \bn \to \bn$ converges to $F \in \text{Aut}(\bn)$ uniformly over every compact subsets of $\bn$. Then $\psi_{n} \circ \gamma_{n} \to \eta$ (upto a subsequence ) uniformly over every compact subsets of $\mathbb{R}$ to a geodesic line $\eta: \mathbb{R} \to \bn$.
\end{lemma}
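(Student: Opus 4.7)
The plan is to extract a subsequential limit $\eta$ via Arzela--Ascoli and then verify it is a geodesic by leveraging the fact that $\phi_n$ is asymptotically a Kobayashi isometry on compact subsets of $\bn$, a consequence of $\psi_n \circ \phi_n \to F \in \text{Aut}(\bn)$.

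Since $\psi_n$ is Kobayashi-contracting and $\gamma_n$ is a $K_D$-geodesic, $\psi_n \circ \gamma_n \colon [-t_n,\infty) \to (\bn, K_{\bn})$ is $1$-Lipschitz with $\psi_n(\gamma_n(0)) = 0$. Hence $\psi_n(\gamma_n(t))$ lies in the closed $K_{\bn}$-ball of radius $|t|$ about $0$, a compact subset of $\bn$. Because $t_n \to \infty$, any compact subset of $\mathbb{R}$ is eventually contained in $[-t_n,\infty)$, so Arzela--Ascoli yields a subsequence along which $\psi_n \circ \gamma_n$ converges uniformly on compact subsets of $\mathbb{R}$ to a $1$-Lipschitz map $\eta\colon \mathbb{R} \to \bn$.

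Next I would upgrade $\phi_n$ from a mere contraction to an almost-isometry. For any $u,v \in \bn$,
\[
K_{\bn}(\psi_n\circ\phi_n(u),\, \psi_n\circ\phi_n(v)) \,\le\, K_D(\phi_n(u), \phi_n(v)) \,\le\, K_{\bn}(u,v),
\]
and both outer terms tend to $K_{\bn}(u,v)$ since $F$ is a $K_{\bn}$-isometry; joint continuity promotes this to $K_D(\phi_n(u), \phi_n(v)) \to K_{\bn}(u,v)$ uniformly on compact subsets of $\bn \times \bn$. In particular, for every $R>\eps>\delta>0$ the hypothesis of Lemma~\ref{L:containmentball} is satisfied by $\phi_n$ for all large $n$, so Lemma~\ref{L:containofgeo} (applied with $t_0=0$) gives, for each compact $K \subset \mathbb{R}$, an $R_K > 0$ with $\gamma_n(K) \subset B_D(\phi_n(B_{R_K}), \eps)$ for all large $n$.

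To see $\eta$ is a geodesic, fix $s<t$ and $\eps>0$; the preceding step yields $u_n, v_n \in \overline{B_R} \subset \bn$ (with $R$ depending only on $s,t,\eps$) such that $K_D(\gamma_n(s), \phi_n(u_n))<\eps$ and $K_D(\gamma_n(t), \phi_n(v_n))<\eps$. Along a further subsequence, $u_n \to u^*$ and $v_n \to v^*$ in the compact $\overline{B_R}$; the almost-isometry of $\phi_n$ then gives $K_{\bn}(u^*,v^*) = \lim_n K_D(\phi_n(u_n), \phi_n(v_n)) \ge (t-s)-2\eps$, while the contraction of $\psi_n$ together with $\psi_n\circ\phi_n \to F$ yields $K_{\bn}(F(u^*),\eta(s)) \le \eps$ and $K_{\bn}(F(v^*),\eta(t)) \le \eps$. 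Since $F$ is an isometry of $K_{\bn}$, the triangle inequality gives
\[
K_{\bn}(\eta(s), \eta(t)) \,\ge\, K_{\bn}(u^*,v^*) - 2\eps \,\ge\, (t-s) - 4\eps,
\]
and letting $\eps \to 0$, combined with the $1$-Lipschitz upper bound from the first step, concludes the proof. The main obstacle is precisely this last step: $\psi_n$ being only Kobayashi-contracting cannot lower-bound $K_{\bn}(\psi_n\gamma_n(s), \psi_n\gamma_n(t))$ directly, and the detour through $\phi_n$ afforded by Lemmas~\ref{L:containmentball} and~\ref{L:containofgeo} is what circumvents this difficulty.
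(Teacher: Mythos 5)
Your argument follows essentially the same route as the paper: Arzel\`a--Ascoli to extract $\eta$, the observation that $K_{\bn}(\psi_n\circ\phi_n(u),\psi_n\circ\phi_n(v))\le K_D(\phi_n(u),\phi_n(v))\le K_{\bn}(u,v)$ forces $\phi_n$ to be an almost-isometry on $\overline{B}_R$ (this is the paper's Claim 1), then Lemmas~\ref{L:containmentball} and~\ref{L:containofgeo} to trap $\gamma_n(K)$ in $B_{D}(\phi_n(B_{R_K}),\eps)$, and finally a lower bound on $K_{\bn}(\eta(s),\eta(t))$ obtained by comparing through the limit automorphism $F$. Your endgame (pass to limit points $u^*,v^*\in\overline{B}_R$ of the approximating points and use that $F$ is an isometry) is a pointwise variant of the paper's Claim 2, which instead proves the uniform estimate $K_{\bn}(\psi_n(z),\psi_n(w))\ge K_D(z,w)-9\eps$ on $B_D(\phi_n(B_R),\eps)$; both are the same idea, though the paper's uniform version is reused later (Lemmas~\ref{L:stablegeo} and~\ref{L:differgeo}), so your local version buys brevity here at the cost of not being quotable afterwards.

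There is one unverified point: you assert that ``the hypothesis of Lemma~\ref{L:containmentball} is satisfied by $\phi_n$ for all large $n$,'' but that lemma (and hence Lemma~\ref{L:containofgeo}) also requires $\phi_n$ to be an \emph{open} map on $B_R$, not just the metric inequality $K_D(\phi_n(z),\phi_n(0))>K_{\bn}(0,z)-\delta$. Openness is genuinely used in the proof of Lemma~\ref{L:containmentball} (to conclude that the limit $z_0$ of preimages of a boundary point of $\phi_n(B_R)$ lies on $\partial B_R$), and it is not automatic for a map in $\mco(\bn,D)$. The paper supplies it by noting that $\det D(\psi_n\circ\phi_n)\to \det DF$ uniformly on $\overline{B}_R$ with $|\det DF(\cdot)|$ bounded away from $0$, so $\det D\phi_n$ is nonvanishing on $B_R$ for large $n$ and $\phi_n$ is locally biholomorphic, hence open there. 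This is an easy fix, but as written your appeal to Lemmas~\ref{L:containmentball} and~\ref{L:containofgeo} is incomplete without it.
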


\begin{proof}
Let $\eps>0$,  $K \st \mathbb{R}$ be any compact subset and $r=\sup_{t \in K} |t|>0$. Clearly, there exists $n_{K} \in \mathbb{N}$ such that $K \st \text{domain}(\gamma_{n})$ for every $n \geq n_{K}$. By the Arzelà-Ascoli theorem, the sequence $\psi_{n} \circ \gamma_{n}$ admits a subsequence that converges uniformly on $K$. Renaming the subsequence, we assume without loss of generality that $\psi_{n} \circ \gamma_{n}$ converges uniformly on $K$ as $n \to \infty$. Since the compact set is arbitrary, we  define  $\eta: \mathbb{R} \to \bn$  by $\eta:=\lim_{n \to \infty} \psi_{n}\circ \gamma_{n}$, where the convergence is uniform over every compact subset of $\re$. We show that $\eta $ is a geodesic ray. To this aim, we prove the following claim:

\begin{claim}\label{Claim1}
     For any $R> \eps> \delta>0$, there exists $n_{R,\eps} \in \mathbb{N}$ such that
     \begin{align}\label{E:claim1}
         K_{\bn}(x,y) \geq K_{D}(\phi_{n}(x),\phi_{n}(y)) \geq K_{\bn}(x,y) -\delta~~\forall x,y \in \overline{B_{R}},~~\forall n \geq n_{R, \eps}.
     \end{align}
     \begin{proof}
      Here  $\overline{B}_{R}$ is a compact subset of $\bn$. From the assumption, we conclude that $\psi_{n}\circ \phi_{n} \to F \in \text{Aut}(\bn)$ uniformly over $\overline{B}_{R}$. From  the continuity of Kobayashi distance on the ball, it follows  that for $\eps>\delta>0$ there exists $n_{R, \eps} \in \mathbb{N}$ such that for all $n \geq n_{R, \eps}$ the following holds: 
      \begin{align}
          K_{D}(\phi_{n}(x), \phi_{n}(y))\geq  K_{\bn}(\psi_{n}\circ\phi_{n}(x), \psi_{n}\circ\phi_{n}(y))\geq K_{\bn}(F(x), F(y))-\delta.
      \end{align}
 Since $F \in \text{Aut}(\bn)$, we conclude from the  above equation that $ K_{D}(\phi_{n}(x), \phi_{n}(y))\geq K_{\bn}(x, y)-\delta$ for all $x, y \in \overline{B}_{R}$ and for all $n \geq n_{R, \eps}$. The first part of \eqref{E:claim1} follows from the fact that holomorphic maps are non-expanding with respect to the Kobayashi distance. This proves the claim.     
 \end{proof}
\end{claim}
\noindent

Here $\overline{B}_{R}$ is compact subset of $\bn$ and according to our assumption, we have $\psi_{n} \circ \phi_{n}$ converges to $F \in \text{Aut}(\bn)$ uniformly on $\overline{B}_{R}$ as $n \to \infty$. Hence $\text{det}(D(\psi_{n} \circ \phi_{n})(z)) \to 1$ uniformly on $\overline{B_{R}}$ as $n \to \infty$.  Therefore, from holomorphic inverse function theorem, it follows that there exists $n'_{R, \eps} \in \mathbb{N}$ such that $ \phi_{n}$ are open maps on $B_{R}$ for all $n \geq n'_{R, \eps}$.  For the chosen compact set  $K$, we choose $R_{K}>0$ as Lemma~\ref{L:containofgeo}. We next choose  $n_{R_{K}, \eps} \in \mathbb{N}$ such that $\phi_{n}$ satisfies \eqref{E:claim1} for $R=R_{K}$ and $\delta=\frac{\eps}{2}$.    Therefore, we infer from Lemma~\ref{L:containofgeo} that $\gamma_{n}(K) \st B(\phi_{n}(B_{R_{K}}), \eps)$ for all $n \geq l_{R_{K}, \eps}:=\max\{n_{R_{K}, \eps}, n_{K}, n'_{R_{K}, \eps}\}$.

\medskip

Here we  claim  the following:
\begin{claim}\label{Claim2}
    For any $R, \eps>0$, there exits $M_{R, \eps} \in \mathbb{N}$ such that 
    \begin{align}
        K_{\bn}(\psi_{n}(z), \psi_{n}(w)) \geq K_{D}(z,w)-9\eps ~~\forall z, w
\in B_{D}(\phi_{n}(B_{R}), \eps)~~\forall n \geq M_{R, \eps}.    \end{align}
\end{claim}
\begin{proof}
    Let $z', w' \in B(\phi_{n}(B_{R}), \eps)$. Clearly, there exist $z_{n,\eps}, w_{n,\eps} \in B_{R}$ such that 
\begin{align}\label{E:dist0}
   K_{D}(z', \phi_{n}(z_{n,\eps}))&<\text{dist}(z', \phi_{n}(B_{R}))+\eps <2\eps, 
    \end{align}
\begin{align}\label{E:dist1}
    K_{D}(w', \phi_{n}(w_{n,\eps}))&<\text{dist}(w', \phi_{n}(B_{R}))+\eps <2\eps. 
    \end{align}
\noindent
We now deduce the following from triangle inequality:
\begin{align}\label{E:esti0}
 K_{\bn}(\psi_{n}(z'), \psi_{n}(w')) &\geq K_{\bn}(\psi_{n}\circ \phi_{n}(z_{n, \eps}),\psi_{n}\circ \phi_{n}(w_{n, \eps}))-K_{\bn}(\psi_{n}\circ\phi_{n}(z_{n,\eps}),\psi_{n}(z'))\notag\\&-K_{\bn}(\psi_{n}\circ\phi_{n}(w_{n,\eps}),\psi_{n}(w')), \notag\\
\intertext{since holomorphic maps are non-expansive with respect to the Kobayashi distance, we get}
 K_{\bn}(\psi_{n}(z'), \psi_{n}(w')) &\geq   K_{\bn}(\psi_{n}\circ \phi_{n}(z_{n, \eps}),\psi_{n}\circ \phi_{n}(w_{n, \eps}))-K_{D}(\phi_{n}(z_{n, \eps}),z')-K_{D}(\phi_{n}(w_{n, \eps}),w').
\end{align} 
\noindent
 It follows from  \eqref{E:dist0}, \eqref{E:dist1} and \eqref{E:esti0} that 
\begin{align}\label{E:estim}
 K_{D}(\psi_{n}(z'),\psi_{n}(w')) &  \geq   K_{\bn}(\psi_{n}\circ \phi_{n}(z_{n, \eps}),\psi_{n}\circ \phi_{n}(w_{n, \eps}))-4\eps. 
 \end{align}
\noindent
Since $\psi_{n} \circ \phi_{n}$ converges to an automorphism $F \in \text{Aut}(\bn)$ uniformly on $\overline{B}_{R}$ and automorphisms are Kobayashi isometry, hence, we get that there exists $M_{R, \eps} \in \mathbb{N}$ such that 
 \begin{align}\label{E:estimate1}
K_{D}(\psi_{n}(z'),\psi_{n}(w')) &\geq  K_{\bn}(z_{n, \eps}, w_{n, \eps})-5\eps ~~\forall n \geq M_{R, \eps}. 
 \end{align}
\noindent
From the triangle inequality,  we deduce that:
\begin{align*}
    K_{D}(z',w')&\leq K_{D}(z', \phi_{n}(z_{n,\eps}))+ K_{D}(\phi_{n}(z_{n,\eps}),\phi_{n}(w_{n,\eps}))+ K_{D}(w', \phi_{n}(w_{n,\eps}))
    \end{align*}
    using \eqref{E:dist0}, \eqref{E:dist1} we get the following from the above equation
    \begin{align}\label{E:estimate}
 K_{D}(z',w')    &\leq K_{D}(\phi_{n}(z_{n,\eps}),\phi_{n}(w_{n,\eps}))+4\eps \notag \\
    &\leq K_{\bn}(z_{n,\eps}, w_{n,\eps})+4\eps.
    \end{align}

Therefore, from \eqref{E:estimate},\eqref{E:estimate1} we obtain that 
\begin{align}
   K_{D}(\psi_{n}(z'),\psi_{n}(w')) &\geq K_{D}(z',w')-9\eps ~~~ \forall z',w' \in B(\phi_{n}(B_{R}),\eps)~~ \forall n \geq M_{R,\eps}.
\end{align}
This proves the Claim~\ref{Claim2}. 
\end{proof}
Now, let us go back to the proof of the lemma. We have that $\gamma_{n}(K) \st B(\phi_{n}(B_{R_{K}}), \eps)$ for all $n \geq l_{R_{K}, \eps}$. Hence, in view of Claim~\ref{Claim2}, we conclude that for all $n >N_{R, \eps}:=\max\{l_{R_{K}, \eps}, M_{R_{K}, \eps}\}$, for all $s, t \in K$ the following holds: 
\begin{align}\label{E:estimate4}
    K_{D}(\gamma_{n}(s), \gamma_{n}(t)) \geq K_{\bn}(\psi_{n}\circ\gamma_{n}(s),\psi_{n}\circ\gamma_{n}(t))&\geq K_{D}(\gamma_{n}(s),\gamma_{n}(t))-9\eps, \notag\\
    \intertext{since $\gamma_{n}$ is a  geodesic, we get}
 |s-t|\geq  K_{\bn}(\psi_{n}\circ\gamma_{n}(s),\psi_{n}\circ\gamma_{n}(t))&\geq |s-t|-9\eps.  
\end{align}
\noindent
Now letting $n \to \infty$ in \eqref{E:estimate4}, we get that 
\begin{align}\label{E:estimate5}|s-t|\geq  K_{\bn}(\eta(s),\eta(t))&\geq |s-t|-9\eps.
\end{align}
Since the compact set $K \st \mathbb{R}$   and $\eps>0$ is arbitrarily chosen, we conclude that 
$K_{\bn}(\eta(s),\eta(t))=|s-t|$ for all $s,t \in \mathbb{R}$. This proves the lemma.
\end{proof}
\begin{lemma}\label{L:stablegeo}
    Let $t_{n}>0, $ and   $\gamma_{n}:[-t_{n}, \infty) \to D$, $\psi_{n} \in \mco(D, \bn)$  and  $\phi_{n} \in \mco(\bn, D)$ as Lemma~\ref{L:convergeo}. If $\tilde{\gamma}_{n}\colon [-t_{n}, \infty) \to D$ is a sequence of geodesic rays  such that $\sup_{t \in [-t_{n}, \infty)}K_{D}(\gamma_{n}(t), \tilde{\gamma}_{n}(t))< \infty$ for all $ n \in \mathbb{N}$, then $\psi_{n} \circ \tilde{\gamma}_{n} \to \tilde{\eta}$ (upto a subsequence) uniformly over every compact subset of $\mathbb{R}$ to a geodesic ray $\tilde{\eta}: \mathbb{R} \to \bn$.
\end{lemma}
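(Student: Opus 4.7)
The plan is to mirror the proof of Lemma~\ref{L:convergeo} step by step, with the closeness hypothesis $\sup_{t} K_D(\gamma_n(t), \tilde{\gamma}_n(t))<\infty$ playing the role of the anchoring $\gamma_n(0)=\phi_n(0)=z_n$ used there. First I would set $\tilde{\xi}_n := \psi_n \circ \tilde{\gamma}_n$. Since $\psi_n \in \mco(D,\bn)$ is non-expanding for the Kobayashi distance and $\tilde{\gamma}_n$ is a $K_D$-geodesic, each $\tilde{\xi}_n$ is $1$-Lipschitz as a map $\mathbb{R} \to (\bn, K_{\bn})$, giving the equicontinuity needed for an Arzel\`a--Ascoli argument.

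For pointwise relative compactness, set $M:=\sup_n \sup_t K_D(\gamma_n(t), \tilde{\gamma}_n(t))$, which I may assume finite after passing to a subsequence. For each fixed $t\in\mathbb{R}$ and $n$ large enough that $t\in[-t_n,\infty)$,
\[
K_{\bn}\bigl(\tilde{\xi}_n(t),\, \psi_n\circ \gamma_n(t)\bigr) \leq K_D\bigl(\tilde{\gamma}_n(t),\gamma_n(t)\bigr) \leq M,
\]
and by Lemma~\ref{L:convergeo} the points $\psi_n\circ\gamma_n(t)$ converge to $\eta(t)\in\bn$. Hence $\{\tilde{\xi}_n(t)\}_n$ is contained in a compact subset of $\bn$, and a standard diagonal extraction from Arzel\`a--Ascoli gives a subsequence with $\tilde{\xi}_n \to \tilde{\eta}$ uniformly on compact subsets of $\mathbb{R}$, for some continuous $\tilde{\eta}\colon \mathbb{R} \to \bn$. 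The ``openness'' step in Lemma~\ref{L:convergeo} (via the holomorphic inverse function theorem applied to $\psi_n\circ\phi_n \to F \in \text{Aut}(\bn)$) still applies and gives that $\phi_n$ is open on any fixed ball $B_R$ for all large $n$, so the hypotheses of Lemma~\ref{L:containmentball} are available.

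To upgrade $\tilde{\eta}$ to a geodesic line, I would re-run Claim~\ref{Claim2} of Lemma~\ref{L:convergeo} with $\tilde{\gamma}_n$ in place of $\gamma_n$. The required input is a containment $\tilde{\gamma}_n(K) \subset B_D(\phi_n(B_R),\eps)$ for a suitable $R$. For a compact $K\subset\mathbb{R}$ with $r:=\sup_{t\in K}|t|$, the triangle inequality gives
\[
K_D\bigl(\tilde{\gamma}_n(t),\phi_n(0)\bigr) \leq K_D\bigl(\tilde{\gamma}_n(t),\gamma_n(t)\bigr) + K_D\bigl(\gamma_n(t),z_n\bigr) \leq M + r,
\]
so taking $R:=M+r+\eps$ and invoking Lemma~\ref{L:containmentball} yields the desired containment for $n$ large. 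Applying Claim~\ref{Claim2} and using that $\tilde{\gamma}_n$ is a geodesic then gives
\[
|s-t| = K_D\bigl(\tilde{\gamma}_n(s),\tilde{\gamma}_n(t)\bigr) \geq K_{\bn}\bigl(\tilde{\xi}_n(s),\tilde{\xi}_n(t)\bigr) \geq |s-t| - 9\eps,
\]
for all $s,t\in K$ and all sufficiently large $n$. Letting $n\to\infty$ and then $\eps\to 0$ produces $K_{\bn}(\tilde{\eta}(s),\tilde{\eta}(t)) = |s-t|$, so $\tilde{\eta}$ is an isometric embedding $\mathbb{R}\hookrightarrow (\bn,K_{\bn})$. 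The main subtlety I expect is the initial subsequence extraction producing a uniform bound $M$ on the pairwise $K_D$-distance between the two families of geodesics (without which $\tilde{\xi}_n(0)$ could drift to $\partial\bn$ and the limit would fail to land in $\bn$), together with the careful bookkeeping of the threshold indices $n_{R,\eps}$, $n'_{R,\eps}$, $M_{R,\eps}$ from Lemma~\ref{L:convergeo}; the remainder is a direct adaptation of that lemma's argument.
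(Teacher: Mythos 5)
Your argument is essentially the paper's own proof: Arzel\`a--Ascoli applied to $\psi_n\circ\tilde{\gamma}_n$, then the triangle-inequality containment $\tilde{\gamma}_n(K)\subset B_D(\phi_n(B_{R_K}),\eps)$ with $R_K$ built from the distance bound plus $\sup_{t\in K}|t|$ (the paper's Claim~\ref{Claim3}), and finally the Claim~\ref{Claim2} sandwich $|s-t|\ge K_{\bn}(\psi_n\circ\tilde{\gamma}_n(s),\psi_n\circ\tilde{\gamma}_n(t))\ge |s-t|-9\eps$ with $n\to\infty$, $\eps\to 0$. The one caveat is that passing to a subsequence does not by itself make $M=\sup_n\sup_t K_D(\gamma_n(t),\tilde{\gamma}_n(t))$ finite --- a bound uniform in $n$ must be assumed --- but this is precisely the implicit reading in the paper's proof (its $L$ is treated as independent of $n$), and it holds in the intended application where the bound is the constant $C$, so your proposal matches the paper's argument.
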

\begin{proof}
    Let $K \st \re$ be any compact subset. Clearly, there exists $n_{K} \in \mathbb{N}$ such that $K \st \text{domain}(\tilde{\gamma}_{n})$ for all $n \geq n_{k}$.  It follows from  Arzel\`a-Ascoli theorem that the sequence $\psi_{n}\circ \tilde{\gamma}_{n} $ has a convergent subsequence that converges uniformly to some function on $K$. Renaming the subsequence, we assume that that $\psi_{n} \circ \tilde{\gamma}_{n}$ uniformly converges on $K$. Now define $\tilde{\eta}:\re \to D$ by  $\tilde{\eta}(t):=\lim_{n \to \infty}\psi_{n}\circ \tilde{\gamma}_{n}(t)$. We need to show that $\tilde{\eta}$ is a geodesic line.  To this aim, we prove the following claim:
    \begin{claim}\label{Claim3}
    For all  compact  subset $K \st \re$ and $\eps>0$ there exists $R_{K}>0$ and $m_{K, \eps} \in \mathbb{N}$ such that $\tilde{\gamma}_{n}(K) \st B(\phi_{n}(B_{R_{K}}), \eps)$ for all $n \geq m_{R_{K}, \eps}$.
    \end{claim}

    \begin{proof}

Let $L:=\sup_{t \in [-t_{n}, \infty)}K_{D}(\gamma_{n}(t), \tilde{\gamma}_{n}(t))$ and $R_{K}>r+L+\eps$ where $r=\sup_{t \in K}|t|>0$. Proceeding in a similar way as Lemma~\ref{L:convergeo}, we choose $m_{R, \eps} \in \mathbb{N}$ such that $\phi_{n}$ satisfies the conditions of Lemma~\ref{L:containmentball} for all $n >m_{R_{K}, \eps}$. We have that  
\begin{align}
    K_{D}(\phi_{n}(0), \tilde{\gamma}_{n}(t))&\leq K_{D}(\tilde{\gamma}_{n}(t),\gamma_{n}(t))+K_{D}(\gamma_{n}(t),\gamma_{n}(0))\notag\\
    &\leq L+r<R_{K}-\eps .
\end{align}
Therefore, from Lemma~\ref{L:containofgeo} 
 $\tilde{\gamma}_{n}(K) \subset B(\phi_{n}(B_{R_{k}}), \eps)$ for all $n \geq m_{K, \eps}$. This proves the claim.
\end{proof}
Now invoking Claim~\ref{Claim2} and  proceeding same way as Lemma~\ref{L:containofgeo} we conclude that $\psi_{n}\circ \tilde{\gamma}_{n} \to \tilde{\eta}$ uniformly on every compact subset of $\re$ and $\tilde{\eta}:\re \to \bn$. This proves the lemma.
    \end{proof}

\begin{lemma}\label{L:differgeo}
    Let $ \gamma_{n}, \eta,  \phi_{n}, \psi_{n}$ be as Lemma~\ref{L:convergeo} and $\tilde{\gamma}_{n}$ $\tilde{\eta}$ be as Lemma~\ref{L:stablegeo}. If there exists $c>0$ such that $0<c \leq \inf_{s \geq 0, n \in \mathbb{N}}K_{D}(\gamma_{n}(0), \tilde{\gamma}_{n}(s))$, then $\eta$ and $\tilde{\eta}$ are two distinct geodesic lines.
\end{lemma}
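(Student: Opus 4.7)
The plan is to use the estimate from Claim~\ref{Claim2} of Lemma~\ref{L:convergeo} together with the containment of $\tilde{\gamma}_{n}(K)$ in $B_{D}(\phi_{n}(B_{R_{K}}),\eps)$ established in Claim~\ref{Claim3} of Lemma~\ref{L:stablegeo}, and then pass to the limit in $n$ to transfer the positive separation $c$ from $D$ to $\bn$. This will show that $\tilde{\eta}(s)\neq \eta(0)$ for every $s\geq 0$, forcing $\eta$ and $\tilde{\eta}$ to be distinct as functions $\re\to\bn$.

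First I would record that $\eta(0)=\lim_{n\to\infty}\psi_{n}(\gamma_{n}(0))=\lim_{n\to\infty}\psi_{n}(z_{n})=0$, since $\psi_{n}(z_{n})=0$ by hypothesis. Fix an arbitrary compact set $K\subset[0,\infty)$ and an arbitrary $\eps\in(0,c/9)$. Since $\gamma_{n}(0)=z_{n}=\phi_{n}(0)\in \phi_{n}(B_{R})$ for any $R>0$, the point $\gamma_{n}(0)$ automatically lies in $B_{D}(\phi_{n}(B_{R}),\eps)$. On the other hand, by Claim~\ref{Claim3} applied with the compact set $K\subset\re$, there exists $R_{K}>0$ and $m_{K,\eps}\in\nat$ such that $\tilde{\gamma}_{n}(K)\subset B_{D}(\phi_{n}(B_{R_{K}}),\eps)$ for all $n\geq m_{K,\eps}$.

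Next, I would apply Claim~\ref{Claim2} with $R=R_{K}$ and the pair $z=\gamma_{n}(0)$, $w=\tilde{\gamma}_{n}(s)$ (for $s\in K$): for all $n\geq \max\{M_{R_{K},\eps},m_{K,\eps}\}$ one obtains
\begin{align*}
K_{\bn}(\psi_{n}(\gamma_{n}(0)),\psi_{n}(\tilde{\gamma}_{n}(s))) \;\geq\; K_{D}(\gamma_{n}(0),\tilde{\gamma}_{n}(s))-9\eps \;\geq\; c-9\eps,
\end{align*}
where the last inequality uses the hypothesis $c\leq \inf_{s\geq 0, n\in\nat}K_{D}(\gamma_{n}(0),\tilde{\gamma}_{n}(s))$. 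Letting $n\to\infty$ and using the uniform convergence of $\psi_{n}\circ\gamma_{n}$ to $\eta$ and of $\psi_{n}\circ\tilde{\gamma}_{n}$ to $\tilde{\eta}$ on compact sets, together with the continuity of $K_{\bn}$, yields $K_{\bn}(\eta(0),\tilde{\eta}(s))\geq c-9\eps$ for every $s\in K$.

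Finally, since $\eps\in(0,c/9)$ and the compact set $K\subset[0,\infty)$ were arbitrary, I would let $\eps\to 0$ and exhaust $[0,\infty)$ by compact sets to conclude that $K_{\bn}(0,\tilde{\eta}(s))=K_{\bn}(\eta(0),\tilde{\eta}(s))\geq c>0$ for every $s\geq 0$. In particular $\tilde{\eta}(0)\neq \eta(0)$, so $\eta$ and $\tilde{\eta}$ are distinct geodesic lines in $\bn$. The main obstacle is purely bookkeeping: making sure that, for the chosen $\eps$ and $K$, the radius $R_{K}$ and the thresholds $m_{K,\eps}$, $M_{R_{K},\eps}$, $l_{R_{K},\eps}$, $N_{R_{K},\eps}$ from the previous lemmas can all be chosen uniformly enough that passing $n\to\infty$ is legitimate; this is immediate once one quantifies the statements in the right order, as above.
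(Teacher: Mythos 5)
Your proposal is correct and follows essentially the same route as the paper: both use Claim~\ref{Claim2} together with the containment of the relevant geodesic segments in $B_{D}(\phi_{n}(B_{R_{K}}),\eps)$ (Claim~\ref{Claim3} and the trivial containment of $\gamma_{n}(0)=\phi_{n}(0)$) to get $K_{\bn}(\psi_{n}\circ\gamma_{n}(0),\psi_{n}\circ\tilde{\gamma}_{n}(s))\geq c-9\eps$, and then pass to the limit in $n$ to separate $\eta(0)$ from $\tilde{\eta}$. The only cosmetic difference is that you exhaust $[0,\infty)$ by compacts and let $\eps\to 0$, whereas the paper simply fixes $K=[0,1]$ and $\eps\in(0,c/9)$, which already suffices.
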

\begin{proof}
    Let $K =[0,1]$ and $\eps \in (0, \frac{c}{9})$. It follows from the  proof of Lemma~\ref{L:convergeo} and  Claim~\ref{Claim3}  that there exists $R_{K}>0$ and $\widetilde{N}_{R_{K}, \eps}=\max\{m_{R_{K},\eps},l_{R_{K}, \eps}\}$ such that both $\gamma_{n}[0,1]$ and $\tilde{\gamma}_{n}[0,1]$ are inside $B(\phi_{n}(B_{R_{K}}), \eps)$ for all $n \geq \widetilde{N}_{R_{K}, \eps} $. Therefore, in view of Claim~\ref{Claim2} we conclude that the following holds for all $t \in [0,1]$ and for all $n \geq \widetilde{N}_{R_{K}, \eps}$
    \begin{align}\label{E:estim5}
        K_{\bn}(\psi_{n}\circ \gamma_{n}(0),\psi_{n}\circ \tilde{\gamma}_{n}(t))&\geq K_{D}(\gamma_{n}(0), \tilde{\gamma}_{n}(t))-9\eps \notag\\
        &\geq \inf_{s \geq 0}K_{D}(\gamma_{n}(0), \tilde{\gamma}_{n}(s))-9\eps \notag\\
        &>c-9\eps>0.
    \end{align}
 Letting $n \to \infty$ in \eqref{E:estim5} we get that $K_{\bn}(\eta(0), \tilde{\eta}(t)) \geq c-9\eps>0$ for all $t \in [0,1]$. Therefore, $\eta$ and $\tilde{\eta}$ are two distinct geodesic lines.
 \end{proof}

 \begin{lemma}\label{L:boundline}

Let $ \gamma_{n}, \eta, \tilde{\eta}, \phi_{n}, \psi_{n}$ be as Lemma~\ref{L:differgeo}. Then there exists $C>0$ such that $\sup_{t \in \re}K_{\bn}(\eta(t), \tilde{\eta}(t)) <C$.
 \end{lemma}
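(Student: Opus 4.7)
The plan is to exploit non-expansivity of the maps $\psi_n : D \to \bn$ with respect to the Kobayashi distance. Since each $\psi_n$ is holomorphic, for every $n$ large enough that $t \in [-t_n, \infty)$ one has immediately
\[
K_{\bn}\bigl(\psi_n \circ \gamma_n(t),\, \psi_n \circ \tilde{\gamma}_n(t)\bigr) \leq K_D\bigl(\gamma_n(t),\, \tilde{\gamma}_n(t)\bigr),
\]
so the whole task reduces to bounding the right-hand side uniformly in both $n$ and $t$ and then passing the inequality to the limit. This is in sharp contrast to the previous three lemmas, where one had to control quantities in the \emph{opposite} direction (from $\bn$ back to $D$); here the easy direction suffices.

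For the uniform bound I would appeal to the setting in which these lemmas are being applied, namely where $\gamma_n, \tilde{\gamma}_n$ are parameter-shifts of a fixed pair of asymptotic geodesic rays of $(D,K_D)$; in that case
\[
L := \sup_{n \in \mathbb{N}}\ \sup_{t \in [-t_n, \infty)} K_D\bigl(\gamma_n(t),\, \tilde{\gamma}_n(t)\bigr) < \infty,
\]
which is the uniform-in-$n$ version of the finiteness hypothesis already used inside the proof of Lemma~\ref{L:stablegeo}. For the limit step I would fix $t \in \mathbb{R}$; since $t_n \to \infty$, for all sufficiently large $n$ the displayed inequality holds and its right-hand side is bounded by $L$. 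The pointwise convergences $\psi_n \circ \gamma_n(t) \to \eta(t)$ and $\psi_n \circ \tilde{\gamma}_n(t) \to \tilde{\eta}(t)$, which are immediate consequences of the locally uniform convergences established in Lemmas~\ref{L:convergeo} and \ref{L:stablegeo}, combined with joint continuity of $K_{\bn}$ on $\bn \times \bn$, then yield $K_{\bn}(\eta(t), \tilde{\eta}(t)) \leq L$ for every $t \in \mathbb{R}$. Taking $C := L+1$ (or any constant strictly greater than $L$) completes the argument.

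The main, and essentially only, obstacle I anticipate is verifying that the supremum $L$ is uniform in $n$; this is the one place where the precise structure of the sequences $\gamma_n, \tilde{\gamma}_n$ as parameter-shifts of fixed asymptotic rays is essential (as written, the hypothesis carried over from Lemma~\ref{L:stablegeo} only guarantees finiteness for each fixed $n$). Once this uniform bound is in hand, the remainder is a one-line application of Kobayashi non-expansivity followed by a standard pointwise limit, and none of the deeper apparatus (Claims~\ref{Claim1}--\ref{Claim2}, the open-mapping ingredient, the relative-compactness arguments) developed in the earlier lemmas is required.
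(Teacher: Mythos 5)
Your proof is correct and is essentially the paper's own argument: non-expansivity of $\psi_{n}$ gives $K_{\bn}(\psi_{n}\circ\gamma_{n}(t),\psi_{n}\circ\tilde{\gamma}_{n}(t))\leq K_{D}(\gamma_{n}(t),\tilde{\gamma}_{n}(t))$, this is bounded by a constant uniform in $n$ and $t$, and one passes to the limit on compact intervals using the convergences from Lemmas~\ref{L:convergeo} and \ref{L:stablegeo}. Your remark that the uniform-in-$n$ bound is not literally provided by the hypothesis of Lemma~\ref{L:stablegeo} but by the intended application (the $\gamma_{n},\tilde{\gamma}_{n}$ being parameter shifts of a fixed pair of asymptotic rays, cf.\ \eqref{E:gromovbound}) is accurate; the paper's proof invokes the same constant $C$ without further comment.
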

\begin{proof}
   For any $M >0$, we have  that $\psi_{n} \circ \gamma_{n} \to \eta$ and $\psi_{n} \circ \tilde{\gamma} \to \tilde{\eta}$ uniformly on $[-M, M]$. Since holomorphic maps are non-expansive with respect to Kobayashi distance, hence we have that 
   $$K_{\bn}(\psi_{n} \circ \gamma_{n}(t),\psi_{n} \circ \tilde{\gamma}_{n} (t)) \leq K_{D}(\gamma_{n}(t), \tilde{\gamma}_{n}(t))<C,~~ \forall t \in [-M, M].$$
   Letting $n \to \infty$ in the above equation, we get that $K_{\bn}(\eta(t), \tilde{\eta}(t))\leq C$ for all $t \in [-M, M]$.
Since the relation holds for all $M >0$, we conclude the lemma. 
\end{proof}
We now give the proof of Theorem~\ref{T:approachsgeo}.
\begin{proof}[Proof of Theorem~\ref{T:approachsgeo}] 
If  the theorem is not true, then it follows from Result~\ref{R:appgeocondition}  that there exists two geodesic rays $\gamma_{j}\colon [0, \infty) \to D$ for $j=\{1,2\}$, a sequence of positive real number $t_{n}$ with $t_{n} \to \infty$ as $n \to  \infty$ and constants   $C>c>0$ such that 
    \begin{align}\label{E:gromovbound}
        \sup_{t \geq 0} K_{D}(\gamma_{1}(t), \gamma_{2}(t))<C,
    \end{align}
    and 
    \begin{align}\label{E:appogeocon1}
        0<c\leq \inf_{s \geq 0}K_{D}(\gamma_{1}(t_{n}), \gamma_{2}(s))\leq K_{D}(\gamma_{1}(t_{n}), \gamma_{2}(t_{n}))\leq C.
    \end{align}

\noindent
 From our assumption, $z_{n}:=\gamma_{1}(t_{n}) \to \partial_{H} D$ as $n \to \infty$ and $q_{D}(z_{n}) \to 1$ as $n \to \infty$. It follows from  Remark~\ref{remark} that there exists 
$\psi_{n} \in \mco(D, \bn)$ with $\psi_{n}(z_{n})=0$ and $\phi_{n} \in \mco (\bn, D)$ with $\phi_{n}(0)=z_{n}$ such that $q_{D}(z_{n})=|\text{det}(\psi_{n}'(z_{n}))|^{2}|\text{det}(\phi_{n}'(0))|^{2}$. Therefore, given any $\delta>0$  there exists $n_{\delta} \in \mathbb{N}$ such that  
\begin{align}
|\text{det}\psi_{n}'(\phi_{n}(0))||\text{det}\phi_{n}'(0)| \in (1-\delta, 1]~~~~\forall~~ n \geq n_{\delta}.
\end{align}
Consequently,
\begin{align}\label{E:determinat}
    |\text{det}(\psi_{n}\circ\phi_{n})'(0) | \in (1-\delta, 1]~~~~ \forall n \geq n_{\delta}.
\end{align}
Here $\psi_{n} \circ \psi_{n}(0)=0$ for all $n \in \mathbb{N}$. Hence, by Montel's theorem, there exists a convergent subsequence of $\psi_{n} \circ \phi_{n}$. After passing to a subsequence we can assume that the sequence $\psi_{n} \circ \phi_{n}$ converges to a $F \in \mco(\bn, \bn)$ uniformly over every compact subset of $\bn$. Since $\delta>0$ is arbitrary in \eqref{E:determinat}, hence letting $n \to \infty$ we conclude that $|\text{det}F(0)|=1$.  We now invoke Result~\ref{R:Rudin} to conclude that $F \in \text{Aut}(\bn)$. Let  us  consider two geodesic sequences denoted  by 
$\sigma_{j, n} \colon [-t_{n}, \infty) \to D$ and is defined by $\sigma_{j, n}(t):=\gamma_{j}(t+t_{n})$ for $j =\{1,2\}$. Clearly,  $\psi_{n} \circ \sigma_{1,n}(0)=0$. Taking $\gamma_{n}=\sigma_{1,n}$ in Lemma~\ref{L:convergeo}, we conclude that $\psi_{n}\circ\sigma_{1,n}:[-t_{n}, \infty) \to D$ uniformly over every compact subsets of $\re$ to a geodesic line $\eta \colon \re \to \bn$. In view of \eqref{E:gromovbound} we get that 
\begin{align*}
    K_{D}(\sigma_{1,n}(t),(\sigma_{2,n}(t))=K_{D}(\gamma_{1}(t+t_{n}),\gamma_{2}(t+t_{n}))\leq \sup_{t\geq 0}K_{D}(\gamma_{1}(t),\gamma_{2}(t))<C, ~~\forall n \in \mathbb{N}.
\end{align*}
Therefore, using Lemma~\ref{L:stablegeo} we conclude  that $\psi_{n}\circ\sigma_{2,n}\colon[-t_{n}, \infty) \to \bn$ uniformly over every compact subsets of $\re$ to a geodesic line $\tilde{\eta}\colon \re \to \bn$. From \eqref{E:appogeocon1}, we get that \begin{align}
    \inf_{s \geq 0}K_{D}(\sigma_{1,n}(0),\sigma_{2,n}(s))\geq c>0~~~\forall n \in \mathbb{N}.
\end{align}
Therefore, from Lemma~\ref{L:differgeo} we conclude that two geodesic lines $\eta$ and $\tilde{\eta}$ are distinct.  Finally from Lemma~\ref{L:boundline}, we get that there exists $\zeta \neq \xi \in \partial \bn$ such that $\lim_{t\to \infty}\eta(t)=\lim_{t\to \infty}\tilde{\eta}(t)=\zeta$ and   $\lim_{t\to -\infty}\eta(t)=\lim_{t\to -\infty}\tilde{\eta}(t)=\xi$. This leads to a contradiction. Therefore, we conclude the theorem.
\end{proof}

\section{Consequences of approaching geodesics}

Here we present the proof of the Theorem ~ \ref{T:horosphere}.

\begin{proof}[Proof of Theorem~\ref{T:horosphere}]
 In view of Result~\ref{R:Zhimmersimpleboundary}, it is enough to show that $\overline{\text{Aff}(\cn)\cdot D}\cap \mathbb{X}_{d}$ has simple boundary.  Let $D' \in \overline{\text{Aff}(\cn)\cdot D}\cap \mathbb{X}_{d}\setminus \text{Aff}(\cn)\cdot D$. Then there exists $z_{n} \in D$ such that $ z_{n} \to  \partial D$ as $n \to \infty$ and $A_{n} \in \text{Aff}(\cn)$ such that $(A_{n}D, A_{n}z_{n}) \to (D',z_{0})$ in $\mathbb{X}_{d,0}$ in local Hausdorff convergence. We  now infer from Result~\ref{R:upper semicon} that  
 \begin{align*}
1\geq q_{D'}(z_{0})\geq  \limsup_{n \to \infty} q_{A_{n}D}(A_{n}z_{n})=\limsup_{n \to \infty}q_{D}(z_{n})=1. 
 \end{align*}
\noindent
Therefore, it follows that $D'$ is biholomorphic to the unit ball in $\cn$. Hence, it has a simple boundary. The rest of the argument follows from a similar argument as in the proof of \cite[Lemma 20.8]{Zimmersubelip2022}.  Now we infer from Result~\ref{R:bracci1} and Result~\ref{R:bracci2} that the domain  $D$ has the visibility property with respect to the Kobayashi distance. Here $q_{D}(z) \to 1$ as $z \to \partial D$. By a similar argument, as Theorem~\ref{T:approachsgeo} we conclude  that $(D,K_{D})$ has the approaching geodesic property, hence, invoking, Result~\ref{R:extn of busemanmap} we conclude that there is a homeomorphism between $\overline{D}^{G}$ and $\overline{D}^{H}$ extending  the identity map. Therefore, we are done.
\end{proof}

\noindent {\bf Acknowledgements.} 
The work of the second-named author is supported by the Institute Postdoctoral Fellowship of the Indian Institute of Technology, Kanpur.

	
 \end{document}